\numberwithin{equation}{section}
\numberwithin{figure}{section}
\theoremstyle{plain}
\newtheorem{thm}{\protect\theoremname}
\theoremstyle{definition}
\newtheorem{defn}[thm]{\protect\definitionname}
\providecommand{\definitionname}{Definition}
\providecommand{\theoremname}{Theorem}
\begin{document}
\title[{\small{}on preserving notion of largeness}]{{\large{}a combinatorial viewpoint on preserving notion of largeness
and an abstract rado theorem}}
\author{\noindent Sayan Goswami }
\address{{\large{}Department of Mathematics, University of Kalyani, Kalyani-741235,
Nadia, West Bengal, India.}}
\email{{\large{}sayan92m@gmail.com}}
\author{Subhajit Jana}
\address{{\large{}Department of Mathematics, University of Kalyani, Kalyani-741235,
Nadia, West Bengal, India.}}
\email{{\large{}suja12345@gmail.com}}
\keywords{{\large{}Notion of largeness, Algebra of Stone-\v{C}ech compactification,
Rado's theorem}}
\begin{abstract}
{\large{}In this paper we will systematically study the preservation
of the notion of largeness of sets, arises from the algebraic structure
of Stone-\v{C}ech compactification, under homomorphism and difference
group. Some of these results were studied previously using algebra
of Stone-\v{C}ech compactification. But our approach is purely combinatorial.}{\large\par}
\end{abstract}

\maketitle

\section{{\large{}introduction}}

{\large{}Studying large sets which contain special Ramsey theoretic
configurations has a long history. There are several approaches including
Topological dynamics, ergodic theory, algebra of Stone-\v{C}ech compactification,
combinatorics etc. to study these large sets. In some recent work
\cite{key-1,key-3,key-5,key-6} it has been studied that how these
large sets behave under homomorphism and difference group. It has
been shown that under some restrictions, these large sets are well
preserved under homomorphism whether they are preserved in difference
group with no extra condition. But those proofs use the algebra of
Stone-\v{C}ech compactification whereas we provide here combinatorial
proofs of those results with some new results. Also we will prove
an abstract version of Rado's theorem.}{\large\par}

{\large{}To state the results, we need some definitions and properties
of different large sets which is described below. For details the
readers are invited to read \cite{key-4}.}{\large\par}

{\large{}For any set $X$, let $\mathcal{P}_{f}\left(X\right)$ be
the set of finite nonempty subsets of $X$. Given a subset $A$ of
a semigroup $\left(S,\cdot\right)$, and $x\in S$, let $x^{-1}A=\left\{ y\in S:x\cdot y\in A\right\} $.}{\large\par}
\begin{defn}
{\large{}\cite{key-4} Let $\left(S,\cdot\right)$ be a semigroup
and $A\subseteq S$, then}{\large\par}
\end{defn}

\begin{enumerate}
\item {\large{}The set $A$ is $\mathit{piecewise\:syndetic}$ if and only
if there exists $G\in\mathcal{P}_{f}\left(S\right)$ such that for
every $F\in\mathcal{P}_{f}\left(S\right)$, there exists $x\in S$
such that $F\cdot x\subseteq\bigcup_{t\in G}t^{-1}A$.}{\large\par}
\item {\large{}$\mathcal{T}=^{\mathbb{N}}S$}{\large\par}
\item {\large{}For $m\in\mathbb{N}$, $\mathcal{J}_{m}=$$\left\{ \begin{array}{cc}
\left(t\left(1\right),t\left(2\right),\ldots,t\left(m\right)\right)\in\mathbb{N}^{m} & :\\
t\left(1\right)<t\left(2\right)<\ldots<t\left(m\right)
\end{array}\right\} $}{\large\par}
\item {\large{}Given $m\in\mathbb{N}$, $a\in S^{m+1}$, $t\in\mathcal{J}_{m}$
and $f\in\mathcal{T}$, 
\[
x\left(m,a,t,f\right)=\left(\prod_{j=1}^{m}\left(a\left(j\right)\cdot f\left(t\left(j\right)\right)\right)\right)\cdot a\left(m+1\right)
\]
}{\large\par}
\item {\large{}$A\subseteq S$ is called a $J-set$ iff for each $F\in\mathcal{P}_{f}\left(\mathcal{T}\right)$,
there exists $m\in\mathbb{N}$, $a\in S^{m+1}$, $t\in\mathcal{J}_{m}$
such that, for each $f\in\mathcal{T}$,}\\
{\large{}
\[
x\left(m,a,t,f\right)\in A.
\]
}{\large\par}
\end{enumerate}
{\large{}The following combinatorial definition is quite complicated
but we have to use this.}{\large\par}
\begin{defn}
{\large{}\label{coll} \cite{key-4} Let $\left(S,\cdot\right)$ be
a semigroup and $\mathcal{A}\subseteq\mathcal{P}\left(S\right)$.
Then $\mathcal{A}$ is collection-wise piecewise syndetic if and only
if there exists functions $K:\mathcal{P}_{f}\left(A\right)\rightarrow\mathcal{P}_{f}\left(S\right)$
and $x:\mathcal{P}_{f}\left(A\right)\times\mathcal{P}_{f}\left(S\right)\rightarrow S$
such that for all $F\in\mathcal{P}_{f}\left(S\right)$ and all $\mathcal{F}$
and $\mathcal{H}$ in $\mathcal{P}_{f}\left(A\right)$ with $\mathcal{F}\subseteq\mathcal{H}$
one has $F\cdot x\left(\mathcal{H},F\right)\subseteq\bigcup_{t\in K\left(\mathcal{F}\right)}t^{-1}\left(\bigcap\mathcal{F}\right)$.}{\large\par}
\end{defn}

{\large{}When we write that $\langle C_{F}\rangle_{F\in\mathcal{I}}$
is a downward directed family, we mean that $\left(\mathcal{I},\geq\right)$
is a directed set and when $F,G\in\mathcal{I}$with $F\geq G$, one
has $C_{F}\subseteq C_{G}$.}{\large\par}
\begin{thm}
{\large{}\label{thm } \cite{key-4} Let $\left(S,\cdot\right)$ be
a semigroup and $A\subseteq S$}{\large\par}

{\large{}(a) The set $A$ is said to be Quasi-central if and only
if there is a downward directed family $\langle C_{F}\rangle_{F\in\mathcal{I}}$
of subsets of $A$ such that,}{\large\par}
\begin{enumerate}
\item {\large{}\label{2.1} for each $F\in\mathcal{I}$ and each $x\in C_{F}$,
there exists $G\in\mathcal{I}$ such that $C_{G}\subseteq x^{-1}C_{F}$
and}{\large\par}
\item {\large{}\label{2.2} for each $F\in\mathcal{I}$,$C_{F}$ is piecewise
syndetic.}{\large\par}
\end{enumerate}
{\large{}(b) The set $A$ is said to be central if and only if there
is a downward directed family $\langle C_{F}\rangle_{F\in\mathcal{I}}$
of subsets of $A$ such that,}{\large\par}
\begin{enumerate}
\item {\large{}\label{2.3} for each $F\in\mathcal{I}$ and each $x\in C_{F}$,
there exists $G\in\mathcal{I}$ such that $C_{G}\subseteq x^{-1}C_{F}$
and}{\large\par}
\item {\large{}\label{2.4} $\left\{ C_{F}:F\in\mathcal{I}\right\} $ is
collectionwise piecewise syndetic.}{\large\par}
\end{enumerate}
{\large{}(c) The set $A$ is said to be $C$-set if and only if there
is a downward directed family $\langle C_{F}\rangle_{F\in\mathcal{I}}$
of subsets of $A$ such that,}{\large\par}
\begin{enumerate}
\item {\large{}\label{2.5} for each $F\in\mathcal{I}$ and each $x\in C_{F}$,
there exists $G\in\mathcal{I}$ such that $C_{G}\subseteq x^{-1}C_{F}$
and}{\large\par}
\item {\large{}\label{2.6} for each $F\in\mathcal{I}$,$\bigcap_{F\in\mathcal{F}}C_{F}$
is a $J$-set.}{\large\par}
\end{enumerate}
\end{thm}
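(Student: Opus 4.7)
The plan is to prove these three characterizations by establishing their equivalence with the standard algebraic definitions phrased in terms of idempotents in $(\beta S,\cdot)$: a quasi-central set is a member of an idempotent ultrafilter every member of which is piecewise syndetic; a central set is a member of a minimal idempotent; and a $C$-set is a member of an idempotent every member of which is a $J$-set. I would handle the three cases in parallel, since the closure condition (1) in each part is combinatorially identical and encodes exactly what it means for a filter base to be absorbed by an idempotent; only the largeness condition (2) differs.

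For the forward direction, starting from an idempotent $p \in \beta S$ with the appropriate largeness property and $A \in p$, I would let $\mathcal{I}$ be the set of finite subfamilies of $p \cap \mathcal{P}(A)$, ordered by reverse inclusion, and for each $\mathcal{F} \in \mathcal{I}$ define
\[
C_\mathcal{F} = \Bigl(\bigcap \mathcal{F}\Bigr)^{\star} = \Bigl\{ x \in \bigcap \mathcal{F} : x^{-1}\bigl(\bigcap \mathcal{F}\bigr) \in p \Bigr\}.
\]
The fundamental fact that $B^{\star} \in p$ whenever $B \in p$, and $x^{-1}B^{\star} \in p$ for every $x \in B^{\star}$, immediately gives downward directedness and the closure condition (1). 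For (2): in case (a) each $C_\mathcal{F} \in p$ is piecewise syndetic because every member of $p$ is; in case (c) each $C_\mathcal{F}$ is a $J$-set for the same reason; in case (b) the family $\{C_\mathcal{F}\}$ is collectionwise piecewise syndetic because $p \in K(\beta S)$, and collectionwise piecewise syndetic families are exactly those extending to ultrafilters in $\overline{K(\beta S)}$.

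For the converse, given $\langle C_F \rangle_{F\in\mathcal{I}}$, I would consider the closed set $E = \bigcap_{F \in \mathcal{I}} \overline{C_F}$ in $\beta S$. The closure property (1) translates directly into $E \cdot E \subseteq E$: for $p,q \in E$ and any $F$, the set $\{x : x^{-1}C_F \in q\}$ contains $C_G$ for the $G$ supplied by (1), hence lies in $p$, so $C_F \in p\cdot q$. Thus $E$ is a compact right-topological subsemigroup of $\beta S$ and by Ellis's theorem contains an idempotent $p$. Each $C_F \in p$, so $A \in p$, and the largeness condition (2) for the family translates back into the corresponding algebraic property of $p$ (piecewise syndeticity of every member, membership in the minimal ideal, or $J$-set property of every member, respectively).

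The main obstacle will be the minimal idempotent portion of (b): showing that collectionwise piecewise syndeticity of $\{C_F\}$ forces the constructed idempotent to lie in $K(\beta S)$, not merely in $E$. This requires the Zorn's-lemma characterization of $\overline{K(\beta S)}$ through the witness functions $K(\mathcal{F})$ and $x(\mathcal{H},F)$ of Definition \ref{coll}, intersecting $E$ with the minimal ideal closure and then extracting an idempotent from a minimal subsemigroup. The bookkeeping relating these witnesses to the piecewise syndetic condition is where the technical difficulty lies; parts (a) and (c) are then essentially routine corollaries of the same construction.
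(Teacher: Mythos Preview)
The paper does not prove this theorem at all: it is quoted verbatim from \cite{key-4} (the Hindman--Strauss book) and used as the working \emph{definition} of quasi-central, central, and $C$-sets for the rest of the paper. There is no proof environment following the statement; the text immediately continues with ``Now we systematically study\ldots''. So there is nothing in the paper to compare your proposal against.

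That said, your plan is essentially the standard proof from the Hindman--Strauss source being cited, and it is correct in outline. The construction $C_{\mathcal F}=(\bigcap\mathcal F)^{\star}$ for the forward direction and the compact subsemigroup $E=\bigcap_F\overline{C_F}$ plus Ellis's theorem for the converse are exactly how these characterizations are established there. Your identification of part~(b) as the delicate case is also accurate: one needs the equivalence between collectionwise piecewise syndeticity and the existence of an ultrafilter in $\overline{K(\beta S)}$ containing the family, and then an additional step to get a \emph{minimal} idempotent (intersecting $E$ with a minimal left ideal of $\beta S$, which is nonempty precisely because of the collectionwise condition). One small correction: in part~(c), condition~(2) as stated in the paper requires that every finite intersection $\bigcap_{F\in\mathcal F}C_F$ be a $J$-set, not merely each $C_F$ individually; your proof sketch should invoke that $p\in J(S)$ (the $J$-set ideal) so that every member of $p$, in particular every such finite intersection, is a $J$-set.

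In short: your proposal is a faithful reconstruction of the proof in the cited reference, but the present paper simply imports the result without argument.
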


{\large{}Now we systematically study that, many large sets in a commutative
semigroup $\left(S,+\right)$ is also large in it's difference group
$\left(S-S,+\right)$. To prove these results we will use the combinatorial
characterizations of these sets instead of using the structure of
Stone-\v{C}ech compactification of $\left(S,+\right)$ and $\left(S-S,+\right)$.
So we will not recall the structure of Stone-\v{C}ech compactification
of $\left(S,+\right)$.}{\large\par}

{\large{}For a commutative cancellative semigroup $\left(S,+\right)$,
the difference group $\left(S-S,+\right)$ consists of elements of
the form $\left\{ a-b:a,b\in S\right\} $ where $a-b$ is defined
to be that element for which $b+\left(a-b\right)=a$. It is easy to
check that $\left(S-S,+\right)$ is a commutative group with identity
$0$, where $a+0=a$. As each element $a\in S$ can be embedded in
$S-S$ as $a\rightarrow\left(a+b\right)-b$ for some fixed $b\in S$,
we see that $S\subseteq S-S$.}{\large\par}

{\large{}In \cite{key-1,key-3} it was proved that, if $\Phi:\left(S,\cdot\right)\rightarrow\left(T,\cdot\right)$
is a semigroup homomorphism and $\Phi\left(S\right)$ is large in
some sense in $T$, then for any large set $A$, $\Phi\left(A\right)$
is large in $T$. In this paper we will prove those results using
combinatorial tools and we will prove a new result that if $\Phi\left(S\right)$
is a $J$-set in $T$, then for any $J$-set set $A$, $\Phi\left(A\right)$
is a $J$-set in $T$.}{\large\par}

{\large{}The paper is arranged as, section 2 is devoted to the study
of large sets in difference group and section 3 is devoted to the
study of large sets under homomorphism and section 4 is devoted to
the abstract version of Rado's theorem regarding inhomogeneous partition
regularity.}{\large\par}

\section{{\large{}largeness in difference group}}
\begin{thm}
{\large{}\label{thick} Let $\left(S,+\right)$ be a commutative cancellative
semigroup and $A\subseteq S$ is thick in $S$, then $A$ is thick
in $S-S$.}{\large\par}
\end{thm}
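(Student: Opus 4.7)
The plan is to use the straightforward observation that any finite subset of $S-S$ can be translated back into $S$ by a single element of $S$, after which the thickness of $A$ inside $S$ does all the work. Recall that $A$ is thick in $S$ means: for every finite $F\subseteq S$ there exists $x\in S$ with $F+x\subseteq A$. We must prove the analogous statement in $S-S$.

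So let $F=\{a_{1}-b_{1},a_{2}-b_{2},\ldots,a_{n}-b_{n}\}$ be an arbitrary finite subset of $S-S$ with $a_{i},b_{i}\in S$. The first step is to pick the single ``common denominator'' $b=b_{1}+b_{2}+\cdots+b_{n}\in S$. Using commutativity of $(S,+)$ and the defining property $b_{i}+(a_{i}-b_{i})=a_{i}$, one checks immediately that
\[
(a_{i}-b_{i})+b \;=\; a_{i}+\sum_{j\neq i}b_{j}\;\in\;S
\]
for each $i$, so $F+b$ is a finite subset of $S$.

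The second step is to apply thickness of $A$ in $S$ to this finite set: there exists $y\in S$ such that $(F+b)+y\subseteq A$. Setting $x=b+y\in S\subseteq S-S$ then gives $F+x=(F+b)+y\subseteq A$, which is exactly thickness of $A$ in $S-S$.

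The argument is essentially a one-step reduction; the only point requiring care is making sure that $(a_{i}-b_{i})+b$ genuinely lies in $S$ (not merely in $S-S$), and this is where commutativity of $S$ is used. Cancellativity is needed only to ensure that $S-S$ is well defined and that $S$ embeds into it. I do not anticipate any real obstacle: once one thinks of translating by the sum of all the $b_{i}$'s, the computation is forced.
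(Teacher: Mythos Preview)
Your proof is correct and is essentially identical to the paper's own argument: both translate the finite set $F\subseteq S-S$ by the sum of all the $b_i$'s to land in $S$, then apply thickness of $A$ in $S$. Your version is slightly more careful in explicitly verifying that $(a_i-b_i)+b\in S$ and in noting where commutativity and cancellativity enter, but the content is the same.
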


\begin{proof}
{\large{}Let us take any finite subset of $S-S$, say $F\in\mathcal{P}_{f}(S-S)$.
Let $F=\{x_{1}-y_{1},x_{2}-y_{2},\ldots,x_{n}-y_{n}\}$ where $x_{i},y_{i}\in S$
for $i=1,2,\ldots,\,n$.}\\
{\large{}Consider, 
\begin{align*}
G & =F+(y_{1}+y_{2}+\ldots+y_{n})\\
 & =\left\{ x_{1}+\sum_{i=2}^{n}y_{i},\ldots x_{j}+\sum_{\underset{i\neq j}{i=1}}^{n}y_{i},\ldots,x_{n}+\sum_{i=1}^{n-1}y_{i}\right\} 
\end{align*}
Then $G\in\mathcal{P}_{f}(S)$. Now as $A\subset S$ is thick in $S$,
there exists $y\in S$ such that $G+y\subset A$. This implies that
$F+(y_{1}+y_{2}+\ldots+y_{n})+y\subset A$, i.e. $F+(y+\sum_{i=1}^{n}y_{i})\subset A$.
So, $A$ is thick in $S-S$.}{\large\par}
\end{proof}
{\large{}Note that syndetic set in $\left(S,+\right)$ may not be
syndetic in $\left(S-S,+\right)$ as, $2\mathbb{N}$ is syndetic in
$\left(\mathbb{N},+\right)$ but not in $\left(\mathbb{Z},+\right)$.
But the notion of piecewise syndetic sets are preserved in difference
group. it can be deduced from \cite[Lemma 1.12]{key-5} as:}\\
{\large{}Let $A\subset S$ is piecewise syndetic in $S$ then there
exists $x\in S$ such that $-x+A$ is central in $S$. But then from
\cite[Lemma 1.12]{key-5} , it is also central in $S-S$, hence piecewise
syndetic in $S-S$. As translation of piecewise syndetic sets are
still piecewise syndetic, we have $\left(-x+A\right)+x\subseteq A$
is piecewise syndetic in $\left(S-S,+\right)$. }{\large\par}

{\large{}Now here we give a purely combinatorial approach:}{\large\par}
\begin{thm}
{\large{}\label{piecewise} Let $\left(S,+\right)$ be a commutative
cancellative semigroup and $A\subseteq S$ is piecewise syndetic in
$S$, then $A$ is piecewise syndetic in $S-S$.}{\large\par}
\end{thm}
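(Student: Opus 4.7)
The plan is to mimic the shifting trick used in the proof of Theorem \ref{thick}: any finite subset of $S-S$ can be translated, by adding a suitable element of $S$, into a finite subset of $S$, and then we can exploit piecewise syndeticity of $A$ inside $S$. The witness set $G \in \mathcal{P}_f(S)$ for piecewise syndeticity in $S$ should, I expect, serve as the witness set in $S-S$ as well, because $S \subseteq S-S$ and for any $t\in S$ the set $-t+A$ taken in $S-S$ contains the set $-t+A$ taken in $S$.

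More concretely, first I would fix $G \in \mathcal{P}_f(S)$ satisfying the piecewise syndeticity condition for $A$ in $S$, and claim the same $G$ works in $S-S$. Then, given an arbitrary $F' \in \mathcal{P}_f(S-S)$, write $F' = \{x_1-y_1,\ldots,x_n-y_n\}$ with $x_i,y_i \in S$, and set $w = \sum_{i=1}^n y_i \in S$. Using commutativity and cancellativity, the finite set
\[
F := F' + w = \left\{x_j + \sum_{i \neq j} y_i : 1 \le j \le n\right\}
\]
lies in $\mathcal{P}_f(S)$. By piecewise syndeticity of $A$ in $S$ there exists $x\in S$ with $F + x \subseteq \bigcup_{t\in G}(-t+A)$. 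Then, setting $x' := w+x \in S \subseteq S-S$, one rewrites $F'+x' = F+x$, and hence
\[
F' + x' \subseteq \bigcup_{t\in G}(-t+A),
\]
where the inclusion is now read inside $S-S$. This gives the required piecewise syndeticity of $A$ in the difference group.

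I do not anticipate a genuine obstacle: the argument is essentially a one-step translation reduction, parallel to the thickness case. The only point that deserves a line of explicit verification is that $(-t+A)$, interpreted in $S$, embeds into $(-t+A)$ interpreted in $S-S$ for every $t\in G \subseteq S$, which is immediate from $S\subseteq S-S$ together with the definition of the additive inverse in $S-S$. Everything else is a routine manipulation using commutativity and cancellativity.
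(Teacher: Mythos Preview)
Your proposal is correct and follows essentially the same approach as the paper: both use the fixed witness set $G\in\mathcal{P}_f(S)$ and the same translation trick to push finite subsets of $S-S$ into $S$. The only cosmetic difference is that the paper packages the translation step by invoking Theorem~\ref{thick} directly (applied to the thick set $\bigcup_{t\in G}(-t+A)$), whereas you unfold that argument inline; your explicit remark that $(-t+A)$ in $S$ sits inside $(-t+A)$ in $S-S$ is exactly the point the paper leaves implicit.
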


\begin{proof}
{\large{}Let $A\subseteq S$ is piecewise syndetic in $S$. So there
exists a set $F\in\mathcal{P}_{f}\left(S\right)$ such that $\bigcup_{t\in F}-t+A$
is thick in $S$. So by theorem \ref{thick}, $\bigcup_{t\in F}-t+A$
is thick in $S-S$and so $A$ is piecewise syndetic in $S-S$.}{\large\par}
\end{proof}
\begin{thm}
{\large{}\label{6} Let $\left(S,+\right)$ be a commutative cancellative
semigroup and$A\subseteq S$ is quasi central in $S$, then $A$ is
quasi central in $S-S$.}{\large\par}
\end{thm}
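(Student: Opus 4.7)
The plan is to use the same downward directed family that witnesses quasi-centrality of $A$ in $S$, and then verify that each of the two defining conditions carries over to $S-S$. Concretely, since $A$ is quasi-central in $S$, fix a downward directed family $\langle C_F\rangle_{F\in\mathcal{I}}$ of subsets of $A$ satisfying conditions (\ref{2.1}) and (\ref{2.2}) of Theorem \ref{thm }. The family consists of subsets of $S$, hence of $S-S$, so it is automatically a downward directed family of subsets of $A$ viewed inside the group $S-S$.

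Next I would check condition (\ref{2.1}) in $S-S$. Given $F\in\mathcal{I}$ and $x\in C_F\subseteq S$, choose $G\in\mathcal{I}$ so that $C_G\subseteq x^{-1}C_F$ in $S$; that is, for every $y\in C_G$ one has $x+y\in C_F$. Computing $x^{-1}C_F$ inside the larger semigroup $S-S$ only enlarges this set, so the same containment $C_G\subseteq x^{-1}C_F$ holds in $S-S$. This is essentially a bookkeeping step and carries no content beyond the observation that $C_G\subseteq S$.

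The remaining condition (\ref{2.2}) is where the real work has already been done: each $C_F$ is piecewise syndetic in $S$, and by Theorem \ref{piecewise} each $C_F$ is then piecewise syndetic in $S-S$. Combining this with the previous paragraph, the family $\langle C_F\rangle_{F\in\mathcal{I}}$ witnesses that $A$ is quasi-central in $S-S$.

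There is really no serious obstacle here; the theorem is a direct consequence of Theorem \ref{piecewise} together with the fact that the ``local'' condition (\ref{2.1}) is purely internal to $S$ and is untouched by passing to the enveloping group. The only thing worth being careful about is that $x^{-1}C_F$ is computed in the correct semigroup, and this is handled by noting $C_G\subseteq S$.
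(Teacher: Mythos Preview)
Your proof is correct and follows exactly the same approach as the paper: reuse the same downward directed family $\langle C_F\rangle_{F\in\mathcal{I}}$ and invoke Theorem \ref{piecewise} to upgrade piecewise syndeticity of each $C_F$ from $S$ to $S-S$. Your write-up is actually more careful than the paper's, which omits the (easy) verification that condition (\ref{2.1}) persists in $S-S$ and simply says ``the conclusion follows immediately.''
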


\begin{proof}
{\large{}As $A\subseteq S$ is quasi-central there exists a decreasing
sequence$\langle C_{F}\rangle_{F\in\mathcal{I}}$ of subsets of $A$
guaranteed by theorem \ref{thm }. From the theorem \ref{piecewise}
$C_{F}$ is piecewise syndetic in $S-S$ for all $F\in\mathcal{I}$
as they are piecewise syndetic in $S$. Hence the conclusion follows
immediately.}{\large\par}
\end{proof}
\begin{thm}
{\large{}Let $\left(S,+\right)$ be a commutative cancellative semigroup
and $A\subseteq S$ is central in $S$, then $A$ is central in $S-S$.}{\large\par}
\end{thm}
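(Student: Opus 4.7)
The plan is to reuse the witnessing family from centrality in $S$ and show that all three requirements of the definition of central survive passage to $S-S$. Let $\langle C_F\rangle_{F\in\mathcal{I}}$ be the downward directed family of subsets of $A$ supplied by centrality in $S$, so condition (\ref{2.3}) holds and $\{C_F:F\in\mathcal{I}\}$ is collectionwise piecewise syndetic in $S$. Since each $C_F\subseteq A\subseteq S\subseteq S-S$, the family is automatically downward directed in $S-S$, and condition (\ref{2.3}) transfers for free: for $x\in C_F\subseteq S$, the set $-x+C_F$ computed in $S-S$ coincides on $S$ with the set computed in $S$, so the same $G\in\mathcal{I}$ witnesses it.

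The real content is condition (\ref{2.4}): I must show that $\{C_F:F\in\mathcal{I}\}$ is collectionwise piecewise syndetic in $S-S$. Let $K:\mathcal{P}_f(\mathcal{A})\to\mathcal{P}_f(S)$ and $x:\mathcal{P}_f(\mathcal{A})\times\mathcal{P}_f(S)\to S$ be the functions witnessing collectionwise piecewise syndeticity in $S$, where $\mathcal{A}=\{C_F:F\in\mathcal{I}\}$. The key trick, borrowed from the proof of Theorem \ref{thick}, is that any finite subset of $S-S$ can be translated into $S$ by a single element of $S$. Given $F\in\mathcal{P}_f(S-S)$, write $F=\{x_1-y_1,\ldots,x_n-y_n\}$ and set $y_F=y_1+\cdots+y_n$, so that $F+y_F\in\mathcal{P}_f(S)$. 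Then define
\[
K'(\mathcal{F})=K(\mathcal{F}),\qquad x'(\mathcal{H},F)=y_F+x(\mathcal{H},F+y_F).
\]
Since $y_F\in S$ and $x(\mathcal{H},F+y_F)\in S$, we have $x'(\mathcal{H},F)\in S\subseteq S-S$, and $K'(\mathcal{F})\in\mathcal{P}_f(S-S)$.

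To verify the defining inclusion, fix $\mathcal{F}\subseteq\mathcal{H}$ in $\mathcal{P}_f(\mathcal{A})$. Applying the hypothesis to $F+y_F\in\mathcal{P}_f(S)$ gives
\[
(F+y_F)+x(\mathcal{H},F+y_F)\subseteq\bigcup_{t\in K(\mathcal{F})}\bigl(-t+\textstyle\bigcap\mathcal{F}\bigr)
\]
in $S$, hence also in $S-S$. Rewriting the left side as $F+x'(\mathcal{H},F)$ yields exactly the containment required for $K',x'$ to witness that $\{C_F:F\in\mathcal{I}\}$ is collectionwise piecewise syndetic in $S-S$.

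The main potential obstacle is notational: in the definition, $t^{-1}(\bigcap\mathcal{F})$ must be interpreted in the ambient semigroup, and one needs to check that the inclusion proved in $S$ is not weakened when the right-hand side is enlarged by passage to $S-S$. This is immediate because the same set $\bigcap\mathcal{F}\subseteq S$ appears on both sides, and the translate $-t+\bigcap\mathcal{F}$ in $S-S$ contains the translate computed in $S$. With this verified, the downward directed family $\langle C_F\rangle_{F\in\mathcal{I}}$ serves as the witness for centrality of $A$ in $S-S$.
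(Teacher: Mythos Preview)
Your proof is correct and follows essentially the same approach as the paper: you reuse the witnessing family $\langle C_F\rangle_{F\in\mathcal{I}}$, observe that the shift condition~(\ref{2.3}) transfers for free, and then push any $F\in\mathcal{P}_f(S-S)$ into $S$ via a single translate $y_F$ in order to lift the collectionwise piecewise syndetic witnesses $K,x$ to $S-S$. The paper does exactly this, writing $\theta_F$ for your $y_F$ and defining $y(\mathcal{H},F)=x(\mathcal{H},F+\theta_F)+\theta_F$, which is your $x'(\mathcal{H},F)$; your version is in fact slightly more careful in checking that the inclusion survives the change of ambient semigroup.
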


\begin{proof}
{\large{}As we have already proved the quasi central set is preserved,
we only need to prove that if $\mathcal{A}$ is collectionwise piecewise
syndetic in $S$ then it is in $S-S$. Let the functions $K$ and
$x$ are guaranteed by definition \ref{coll}. For any $F\in\mathcal{P}_{f}\left(S-S\right)$
fix an element $\theta_{F}\in S$ such that $F+\theta_{F}\in S$.
Now define a fuction $\theta:\mathcal{P}_{f}\left(S-S\right)\rightarrow S$
by $\theta\left(F\right)=\theta_{F}$. Now define $y:\mathcal{P}_{f}\left(A\right)\times\mathcal{P}_{f}\left(S-S\right)\rightarrow S-S$
by $y\left(\mathcal{H},F\right)=x\left(\mathcal{H},\left(F+\theta\left(F\right)\right)\right)+\theta\left(F\right)$.
Then for any $F\in\mathcal{P}_{f}\left(S-S\right)$, $F+y\left(\mathcal{H},F\right)=F+x\left(\mathcal{H},\left(F+\theta\left(F\right)\right)\right)+\theta\left(F\right)\subseteq\bigcup_{t\in K\left(\mathcal{F}\right)}t^{-1}\left(\bigcap\mathcal{F}\right)$.
So, $\mathcal{A}$ is collectionwise piecewise syndetic in $S-S$
and this proves the result.}{\large\par}
\end{proof}
{\large{}For algebraic proof of the following theorem one can see
\cite[Lemma 3.11]{key-6}.}{\large\par}
\begin{thm}
{\large{}Let $\left(S,+\right)$ be a commutative cancellative semigroup
and $A\subseteq S$ is  $J$-set in $S$, then $A$ is  $J$-set in
$S-S$.}{\large\par}
\end{thm}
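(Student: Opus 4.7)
The plan is to reduce the $J$-set property in $S-S$ to the hypothesized $J$-set property in $S$, by shifting each test function pointwise into $S$ and then absorbing these shifts into the coefficient sequence. The key simplification, available because $(S,+)$ is commutative, is that
$$x(m,a,t,f) = \sum_{j=1}^{m} a(j) + a(m+1) + \sum_{j=1}^{m} f(t(j)),$$
so only the total sum of the $a$-entries and the values $\{f(t(j))\}$ enter; in particular any shift applied to $f(k)$ can be compensated by adjusting $a(k)$.

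Write $\mathcal{T}' = {}^{\mathbb{N}}(S-S)$. Given a finite family $F = \{f_{1},\ldots,f_{n}\}\in\mathcal{P}_{f}(\mathcal{T}')$, the first step is to choose, for each $k\in\mathbb{N}$, a correction term $c_{k}\in S$ such that $f_{i}(k)+c_{k}\in S$ simultaneously for every $i=1,\ldots,n$. Writing $f_{i}(k)=p_{i,k}-q_{i,k}$ with $p_{i,k},q_{i,k}\in S$, the choice $c_{k}=q_{1,k}+q_{2,k}+\cdots+q_{n,k}$ works by commutativity: $f_{i}(k)+c_{k}=p_{i,k}+\sum_{j\neq i}q_{j,k}\in S$. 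Setting $g_{i}(k)=f_{i}(k)+c_{k}$ therefore produces a function in $\mathcal{T}={}^{\mathbb{N}}S$ for each $i$.

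Next, I would apply the hypothesis that $A$ is a $J$-set in $S$ to the family $\{g_{1},\ldots,g_{n}\}\in\mathcal{P}_{f}(\mathcal{T})$, obtaining $m\in\mathbb{N}$, $a\in S^{m+1}$, and $t=(t_{1}<\cdots<t_{m})\in\mathcal{J}_{m}$ with $x(m,a,t,g_{i})\in A$ for every $i$. To transfer this back to $F$, I set $a'_{j}=a_{j}+c_{t_{j}}$ for $1\leq j\leq m$ and $a'_{m+1}=a_{m+1}$; these all lie in $S\subseteq S-S$. A direct expansion, using commutativity once more, shows $x(m,a',t,f_{i})=x(m,a,t,g_{i})\in A$ for every $i$. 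Since $F$ was arbitrary, $A$ is a $J$-set in $S-S$.

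The main conceptual obstacle is the temptation to use a \emph{uniform} shift: replacing each $f_{i}$ by $f_{i}+b_{i}$ for some fixed $b_{i}\in S$ will not generally land in $\mathcal{T}$, because a single $f_{i}$ can take values in $S-S$ for which no common shift into $S$ exists (consider $S=\mathbb{N}$ and $f_{i}(k)=-k$). The fix is to let the correction $c_{k}$ depend on $k$ and then to use commutativity to reabsorb the per-index shifts $c_{t_{j}}$ into the coefficients $a_{j}$ after the fact. Once this observation is in hand, everything else is routine bookkeeping — analogous in spirit to the shift trick used in Theorem \ref{thick}.
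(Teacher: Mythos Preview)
Your proof is correct and follows essentially the same approach as the paper: both introduce a per-index shift $c_k=\sum_i q_{i,k}$ (the paper calls it $g(t)=\sum_i b_i(t)$) to push every $f_i$ into ${}^{\mathbb{N}}S$, apply the $J$-set hypothesis there, and then absorb the shifts back into the coefficient sequence. The only cosmetic difference is that the paper works in the simplified commutative form $a+\sum_{t\in K}h_i(t)$ and lumps all the shifts into a single translate $b$, whereas you keep the general $x(m,a,t,f)$ notation and distribute $c_{t_j}$ across the individual $a_j$; these are the same move.
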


\begin{proof}
{\large{}Let $A\subseteq S$ is $J$-set in $S$. Let us choose a
finite set $F\in\mathcal{P}_{f}((S-S)^{\mathbb{N}})$ arbitrarily
and let $F=\{f_{1},f_{2},\ldots,f_{k}\}$.}\\
{\large{}For each $i=1,2,\ldots,k$, let $f_{i}(t)=a_{i}\left(t\right)-b_{i}\left(t\right)$,
where $a_{i}\left(t\right),b_{i}\left(t\right)\in S$ for all $t\in\mathbb{N}$.
Let us define a function $g\in S^{\mathbb{N}}$ by,
\[
g(t)=\sum_{i=1}^{k}b_{i}\left(t\right)
\]
Now for each $i=1,2,\ldots,k$, take the functions $h_{i}\in S^{\mathbb{N}}$
by $h_{i}\left(t\right)=f_{i}\left(t\right)+g\left(t\right)$. }{\large\par}

{\large{}Take the new collection $H\in\mathcal{P}_{f}(S^{\mathbb{N}})$
as $H=\left\{ h_{i}\right\} _{i=1}^{k}$. As $A\subseteq S$ is $J$-set
in $S$, there exists $a\in S$ and $K\in\mathcal{P}_{f}\left(\mathbb{N}\right)$
such that 
\[
a+\sum_{t\in K}h_{i}(t)\in A
\]
for all $i=1,2,\ldots,k$. Now 
\[
a+\sum_{t\in K}h_{i}(t)=a+\sum_{t\in K}\left(f_{i}\left(t\right)+g\left(t\right)\right)\in A
\]
Let $b=\sum_{t\in K}\sum_{i=1}^{k}b_{i}\left(t\right)$ then $a+b+\sum_{t\in K}f_{i}\left(t\right)\in A$.
Now $a+b\in S\subset S-S$ and so $A$ is a $J$-set in $S-S$.}{\large\par}
\end{proof}
\begin{thm}
{\large{}Let $\left(S,+\right)$ be a commutative cancellative semigroup
and $A\subseteq S$ is  $C$-set in $S$, then $A$ is  $C$-set in
$S-S$.}{\large\par}
\end{thm}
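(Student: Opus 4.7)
The plan is to reuse the same downward directed witness family and reduce everything to the $J$-set preservation result just proved. Since $A$ is a $C$-set in $S$, Theorem \ref{thm } gives a downward directed family $\langle C_F\rangle_{F\in\mathcal{I}}$ of subsets of $A$ satisfying the two conditions (\ref{2.5}) and (\ref{2.6}) relative to the semigroup $S$. I claim that the very same family witnesses that $A$ is a $C$-set in $S-S$; no modification is needed.

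For condition (\ref{2.5}), fix $F\in\mathcal{I}$ and $x\in C_F$. By assumption there exists $G\in\mathcal{I}$ such that $C_G\subseteq -x+C_F$ holds in $S$, meaning every $y\in C_G$ satisfies $x+y\in C_F$. This identity takes place in $S$, hence a fortiori in the ambient group $S-S$, so the set inclusion $C_G\subseteq -x+C_F$ continues to hold when the preimage is computed inside $S-S$. Thus (\ref{2.5}) is preserved without any change.

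For condition (\ref{2.6}), let $\mathcal{F}\in\mathcal{P}_f(\mathcal{I})$. Then $\bigcap_{F\in\mathcal{F}}C_F$ is a $J$-set in $S$. By the previous theorem (the $J$-set preservation result for the difference group), $\bigcap_{F\in\mathcal{F}}C_F$ is also a $J$-set in $S-S$. This gives (\ref{2.6}) in $S-S$.

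There is no real obstacle here; the argument is essentially that the first condition is an internal semigroup statement that lifts trivially, and the second condition is exactly the content of the preceding theorem. The only minor point to be careful about is verifying that $x^{-1}C_F$ computed in $S-S$ still contains $C_G$ — which it does automatically because $C_G\subseteq S$ and the defining equation $x+y\in C_F$ is identical in both settings.
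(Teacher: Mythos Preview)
Your proof is correct and follows exactly the approach the paper intends: the paper simply says the argument is similar to the quasi-central case (Theorem~\ref{6}) and leaves it to the reader, and you have spelled out precisely those details---reuse the same downward directed family, note that condition~(\ref{2.5}) lifts verbatim, and invoke the preceding $J$-set preservation theorem for condition~(\ref{2.6}).
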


\begin{proof}
{\large{}The proof is similar to the proof of theorem \ref{6} and
so left to the reader.}{\large\par}
\end{proof}

\section{{\large{}largeness under homomorphism}}
\begin{thm}
{\large{}If $\Phi:(S,\cdot)\rightarrow(T,\cdot)$ be a semigroup homomorphism
and $\Phi(S)$ is piecewise syndetic in $T$ then $A\subseteq S$
is piecewise syndetic implies $\Phi(A)$ is also piecewise syndetic
in $T$.}{\large\par}
\end{thm}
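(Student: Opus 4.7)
The plan is to exploit the equivalent reformulation: $A\subseteq S$ is piecewise syndetic iff there is a finite $G\subseteq S$ such that $B:=\bigcup_{t\in G} t^{-1}A$ is \emph{thick} in $S$ (i.e.\ every finite subset of $S$ has a right translate inside $B$). This is immediate from the definition given in the paper. Applying the hypothesis twice, I first fix such a finite $G\subseteq S$ witnessing that $A$ is piecewise syndetic in $S$, and next, using piecewise syndeticity of $\Phi(S)$ in $T$, I fix a finite $H\subseteq T$ for which $\bigcup_{s\in H} s^{-1}\Phi(S)$ is thick in $T$.

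My candidate finite witness for $\Phi(A)$ in $T$ will be $K:=\{\Phi(t)\cdot s:t\in G,\ s\in H\}$, which uses the identity $(\Phi(t)\cdot s)^{-1}\Phi(A)=s^{-1}\Phi(t)^{-1}\Phi(A)$. So the task reduces to showing that for any $E\in\mathcal{P}_f(T)$ there is $y\in T$ with $E\cdot y\subseteq\bigcup_{s\in H,\,t\in G} s^{-1}\Phi(t)^{-1}\Phi(A)$.

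The required $y$ is produced by a two-step shift. First, thickness of $\bigcup_{s\in H} s^{-1}\Phi(S)$ in $T$ yields $y_0\in T$ with $E\cdot y_0\subseteq\bigcup_{s\in H} s^{-1}\Phi(S)$; so for each $e\in E$ I can select $s_e\in H$ and $z_e\in S$ with $s_e\cdot e\cdot y_0=\Phi(z_e)$. Second, applying thickness of $B$ in $S$ to the finite set $Z=\{z_e:e\in E\}$ gives $w\in S$ with $Z\cdot w\subseteq B$. I set $y:=y_0\cdot\Phi(w)$. Then $s_e\cdot e\cdot y=\Phi(z_e)\cdot\Phi(w)=\Phi(z_e\cdot w)$, and since $z_e\cdot w\in B\subseteq\bigcup_{t\in G}t^{-1}A$, the homomorphism inclusion $\Phi(t^{-1}A)\subseteq\Phi(t)^{-1}\Phi(A)$ places $s_e\cdot e\cdot y$ inside $\bigcup_{t\in G}\Phi(t)^{-1}\Phi(A)$, which rearranges to $e\cdot y\in\bigcup_{u\in K} u^{-1}\Phi(A)$.

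The argument uses no cancellativity or commutativity, so it applies to any semigroup homomorphism. The only real obstacle is the bookkeeping: one must coordinate a thickness witness in $T$ (available only because $\Phi(S)$ is large in $T$) with a thickness witness in $S$ (available because $A$ is large in $S$), and manage the double layer of indices $(s,t)\in H\times G$. Once the two-step shift $y=y_0\cdot\Phi(w)$ is set up, the verification is a direct computation using only that $\Phi$ is a homomorphism.
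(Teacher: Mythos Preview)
Your proof is correct and follows essentially the same approach as the paper: both arguments fix a finite witness for the thickness of $\bigcup_{t}t^{-1}A$ in $S$ and a finite witness for the thickness of $\bigcup_{s}s^{-1}\Phi(S)$ in $T$, perform the same two-step shift (first into $\Phi(S)$, then into the $\Phi$-image of the thick set), and conclude with the composite witness $\{\Phi(t)\cdot s\}$. The only cosmetic difference is that the paper first records that $\bigcup_{t}\Phi(t)^{-1}\Phi(A)$ is thick in $\Phi(S)$ and then invokes thickness there, whereas you pull back to $S$, find $w$, and push forward via $\Phi(w)$; these are the same move, and your version is if anything slightly more explicit about where the second shift comes from.
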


\begin{proof}
{\large{}Let $A\subseteq S$ is piecewise syndetic. So there exists
$H\in\mathcal{P}_{f}(S)$ such that for all $F\in\mathcal{P}_{f}(S)$
there exists $x\in S$ , $F\cdot x\subseteq\bigcup_{t\in H}t^{-1}A$.
So, $\bigcup_{t\in H}\Phi(t)^{-1}\Phi(A)$ is thick in $\Phi(S)$
as, $\Phi(\bigcup_{t\in H}t^{-1}A)\subseteq\bigcup_{t\in H}\Phi(t)^{-1}\Phi(A)$.}{\large\par}

{\large{}As $\Phi(S)$ is piecewise syndetic in $T$, there exists
$K\in\mathcal{P}_{f}(T)$ such that for all $G\in\mathcal{P}_{f}(T)$
there exists $y\in S$ , $G\cdot y\subseteq\bigcup_{s\in K}s^{-1}\Phi(S)$.}{\large\par}

{\large{}Choose $F\in\mathcal{P}_{f}(T)$ and $z\in T$ such that
$F\cdot z\subset\bigcup_{s\in K}s^{-1}\Phi(S)$.}{\large\par}

{\large{}Let $F=\{a_{1},a_{2},\dots,a_{n}\}$, so we have $\{a_{1}z,a_{2}z,\dots,a_{n}z\}\subseteq\bigcup_{s\in K}s^{-1}\Phi(S)$
and this implies $\{s_{1}a_{1}z,s_{2}a_{2}z,\dots,s_{n}a_{n}z\}\subseteq\Phi(S)$
where $s_{1},s_{2},\dots,s_{n}\in K$, not necessarily being distinct.}{\large\par}

{\large{}So there exists $y$ such that $\{s_{1}a_{1}z,s_{2}a_{2}z,\dots,s_{n}a_{n}z\}\cdot y\subset\bigcup_{t\in H}\Phi(t)^{-1}\Phi(A)$
which implies $F\cdot zy\subseteq\bigcup_{s\in K}\bigcup_{t\in H}s^{-1}\Phi(t)^{-1}\Phi(A)=\bigcup_{t\in H,s\in K}\left(\Phi\left(t\right)s\right)^{-1}\Phi(A)$.
Hence $\Phi(A)$ is piecewise syndetic in $T$.}{\large\par}
\end{proof}
\begin{thm}
{\large{}\label{11}If $\Phi:(S,\cdot)\rightarrow(T,\cdot)$ be a
semigroup homomorphism and $\Phi(S)$ is piecewise syndetic in $T$
then $A\subseteq S$ is quasi central implies $\Phi(A)$ is also quasi
central in $T$.}{\large\par}
\end{thm}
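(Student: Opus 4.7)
The plan is to mimic the approach used for Theorem \ref{6}, since quasi-centrality is defined in terms of a downward directed family whose members are piecewise syndetic; once we know piecewise syndeticity is preserved under $\Phi$ (which is exactly the content of the preceding theorem), the rest should be bookkeeping using the homomorphism property.

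First I would invoke the quasi-centrality of $A$ in $S$ to obtain a downward directed family $\langle C_F\rangle_{F\in\mathcal{I}}$ of subsets of $A$ witnessing conditions \ref{2.1} and \ref{2.2}. The natural candidate family in $T$ is then $D_F := \Phi(C_F)$ for $F\in\mathcal{I}$, indexed by the same directed set $\mathcal{I}$. Since $C_F\subseteq A$ we get $D_F\subseteq \Phi(A)$, and since $\Phi$ is monotone on subsets, $F\geq G$ in $\mathcal{I}$ forces $C_F\subseteq C_G$ and hence $D_F\subseteq D_G$, so $\langle D_F\rangle_{F\in\mathcal{I}}$ is downward directed.

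Next I would verify the shift condition. Pick $F\in\mathcal{I}$ and $y\in D_F$; write $y=\Phi(x)$ for some $x\in C_F$. Condition \ref{2.1} for the original family gives $G\in\mathcal{I}$ with $C_G\subseteq x^{-1}C_F$. Applying $\Phi$ and using that $\Phi$ is a homomorphism, for any $z\in C_G$ one has $xz\in C_F$, hence $\Phi(x)\Phi(z)=\Phi(xz)\in\Phi(C_F)=D_F$, i.e.\ $\Phi(z)\in y^{-1}D_F$. Thus $D_G\subseteq y^{-1}D_F$, which is condition \ref{2.1} for $\langle D_F\rangle_{F\in\mathcal{I}}$.

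Finally, for condition \ref{2.2} in $T$: each $C_F$ is piecewise syndetic in $S$ by hypothesis, and $\Phi(S)$ is piecewise syndetic in $T$, so the preceding theorem immediately yields that $D_F=\Phi(C_F)$ is piecewise syndetic in $T$. This completes the verification that $\langle D_F\rangle_{F\in\mathcal{I}}$ witnesses quasi-centrality of $\Phi(A)$. There isn't really an obstacle here; the only nontrivial input is the piecewise-syndetic transfer theorem just established, and the mild step worth double-checking is the containment $\Phi(x^{-1}C_F)\subseteq \Phi(x)^{-1}\Phi(C_F)$, which is a one-line consequence of $\Phi$ being a semigroup homomorphism.
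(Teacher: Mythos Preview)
Your proposal is correct and is precisely the argument the paper has in mind: the paper's own proof simply says ``It follows from the above theorem and the chain condition \ref{thm }'' and leaves the details to the reader, and you have supplied exactly those details by pushing forward the witnessing family via $\Phi$ and invoking the preceding piecewise-syndetic transfer theorem.
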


\begin{proof}
{\large{}It follows from the above theorem and the chain condition
\ref{thm }. So we leave it to the reader.}{\large\par}
\end{proof}
\begin{thm}
{\large{}If $\Phi:(S,\cdot)\rightarrow(T,\cdot)$ be a semigroup homomorphism
and $\Phi(S)$ is piecewise syndetic in $T$ then $A\subseteq S$
is central implies $\Phi(A)$ is also central in $T$.}{\large\par}
\end{thm}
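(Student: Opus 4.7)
The plan is to mimic the structure used in Theorem~\ref{6} (quasi-central case under difference groups): take the downward directed family $\langle C_F\rangle_{F\in\mathcal{I}}$ witnessing that $A$ is central in $S$, and propose $\langle \Phi(C_F)\rangle_{F\in\mathcal{I}}$ as the witness for $\Phi(A)$ in $T$. This family is obviously downward directed and lies in $\Phi(A)$. Two things must be verified: the chain condition and collectionwise piecewise syndeticity.

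For the chain condition, the argument is the same as the one invoked in Theorem~\ref{11}: given $x\in\Phi(C_F)$, pick $s\in C_F$ with $\Phi(s)=x$, apply the chain condition for $\langle C_F\rangle$ to obtain $G\in\mathcal{I}$ with $C_G\subseteq s^{-1}C_F$, and then for every $y=\Phi(t)\in\Phi(C_G)$ we have $s\cdot t\in C_F$, hence $x\cdot y=\Phi(s\cdot t)\in\Phi(C_F)$; that is, $\Phi(C_G)\subseteq x^{-1}\Phi(C_F)$.

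The main work is showing that $\mathcal{A}':=\{\Phi(C_F):F\in\mathcal{I}\}$ is collectionwise piecewise syndetic in $T$; this is where the hypothesis that $\Phi(S)$ is piecewise syndetic in $T$ must be combined with the functions $K,x$ witnessing collectionwise piecewise syndeticity of $\{C_F\}$ in $S$. Given $F'=\{a_1,\dots,a_n\}\in\mathcal{P}_f(T)$, first apply piecewise syndeticity of $\Phi(S)$ in $T$ to obtain a finite set $L\subseteq T$, an element $z\in T$, and elements $s_1,\dots,s_n\in L$ with $s_i\cdot a_i\cdot z\in\Phi(S)$ for each $i$, then lift to $u_i\in S$ with $\Phi(u_i)=s_i\cdot a_i\cdot z$, and set $G=\{u_1,\dots,u_n\}\in\mathcal{P}_f(S)$. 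Given $\mathcal{F}'\subseteq\mathcal{H}'$ in $\mathcal{P}_f(\mathcal{A}')$, let $\mathcal{F},\mathcal{H}$ be the corresponding subsets of $\{C_F\}$ and set $w=x(\mathcal{H},G)$, so that $G\cdot w\subseteq\bigcup_{t\in K(\mathcal{F})}t^{-1}(\bigcap\mathcal{F})$. Applying $\Phi$ and using $\Phi(u_i)=s_i a_i z$, one obtains, for each $i$, some $t_i\in K(\mathcal{F})$ with $\Phi(t_i)\cdot s_i\cdot a_i\cdot z\cdot\Phi(w)\in\bigcap\mathcal{F}'$. Thus, defining
\[
K'(\mathcal{F}')=\{\Phi(t)\cdot s : t\in K(\mathcal{F}),\, s\in L\},\qquad x'(\mathcal{H}',F')=z\cdot\Phi(w),
\]
gives $F'\cdot x'(\mathcal{H}',F')\subseteq\bigcup_{r\in K'(\mathcal{F}')}r^{-1}(\bigcap\mathcal{F}')$, which is exactly Definition~\ref{coll} for $T$.

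The main obstacle is the bookkeeping in the last paragraph: one must check that $K'$ depends only on $\mathcal{F}'$ (which it does, since $L$ is fixed and $K(\mathcal{F})$ depends only on $\mathcal{F}$), and that the intermediate choices of preimages $u_i\in\Phi^{-1}(s_i a_i z)$ do not interfere with any later step. Since $L,z$, and the preimages $u_i$ depend only on $F'$, while $w$ depends on $(\mathcal{H}',F')$, the definitions of $K'$ and $x'$ are well posed and the required inclusion holds uniformly, completing the verification.
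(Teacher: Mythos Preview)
Your proof is correct and follows essentially the same route as the paper: both arguments reduce to showing that the image family is collectionwise piecewise syndetic by combining the fixed finite set $L$ (the paper's $G$) coming from piecewise syndeticity of $\Phi(S)$ with the $(K,x)$ data in $S$, and both arrive at the same formulas $K'(\mathcal{F}')=\{\Phi(t)\cdot s:t\in K(\mathcal{F}),\,s\in L\}$ and $x'(\mathcal{H}',F')=z\cdot\Phi(w)$. The only point worth making explicit is that your assignment $\mathcal{F}'\mapsto\mathcal{F}$ must be chosen elementwise (fix one preimage $C\in\mathcal{A}$ for each member of $\mathcal{A}'$) so that $\mathcal{F}'\subseteq\mathcal{H}'$ forces $\mathcal{F}\subseteq\mathcal{H}$, which is needed to invoke the defining inclusion $G\cdot x(\mathcal{H},G)\subseteq\bigcup_{t\in K(\mathcal{F})}t^{-1}(\bigcap\mathcal{F})$.
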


\begin{proof}
{\large{}Let $\Phi:(S,\cdot)\rightarrow(T,\cdot)$ be a semigroup
homomorphism. As we have proved the quasi central set is preserved
under $\Phi$, we have to prove only that, if $\mathcal{A}$ is collectionwise
picewise syndetic in $S$, then $\Phi(\mathcal{A})=\{\Phi(A):A\in\mathcal{A}\}$
is collectionwise picewise syndetic in $T$. Let $K$ and $x$ are
functions arised from the definition of central set.}{\large\par}

{\large{}As $\Phi(S)$ is piecewise syndetic in $T$, there exists
a finite subset $G$ of $T$ such that $\bigcup_{t\in G}t^{-1}\Phi(S)$
is thik in $T$. Let $F\in\mathcal{P}_{f}(T)$ be arbitrary and so
, there exists $y\in T$ such that, $F\cdot y\subseteq\bigcup_{t\in G}t^{-1}\Phi(S)$.}{\large\par}

{\large{}So for each $F\in\mathcal{P}_{f}(T)$ fix an element $y\in T$
and $\Psi_{F}\in\mathcal{P}_{f}(\Phi(S))$ such that $F\cdot y\subseteq\bigcup_{t\in G}t^{-1}\Psi_{F}$.}{\large\par}

{\large{}Now define,}{\large\par}

{\large{}1. $\eta:\mathcal{P}_{f}(T)\rightarrow\Phi(S)$ by $\eta\left(F\right)=\Psi_{F}$}{\large\par}

{\large{}2. $\kappa:\mathcal{P}_{f}(T)\rightarrow T$ by $\kappa\left(F\right)=y$}{\large\par}

{\large{}3. For each $B\in\mathcal{P}_{f}\left(\Phi\left(S\right)\right)$
fix $B^{\prime}\in\mathcal{P}_{f}(S)$ such that $\Phi(B^{\prime})=B$,
where $\Phi\left(B^{\prime}\right)=\{\Phi\left(x\right):x\in B\}$}{\large\par}

{\large{}4. For each $\mathcal{B}\in\mathcal{P}_{f}\left(\Phi\left(\mathcal{A}\right)\right)$
fix $\theta_{\mathcal{B}}\in\mathcal{P}_{f}\left(\mathcal{A}\right)$
such that $\Phi\left(\theta_{\mathcal{B}}\right)=\mathcal{B}$}{\large\par}

{\large{}5. $K_{1}:\mathcal{P}_{f}\left(\Phi\left(\mathcal{A}\right)\right)\rightarrow\mathcal{P}_{f}\left(T\right)$
by $K_{1}\left(\mathcal{\mathcal{B}}\right)=\{\Phi\left(t\right)\cdot s:t\in K\left(\theta_{\mathcal{B}}\right),s\in G\}$}{\large\par}

{\large{}6. $z:\mathcal{P}_{f}\left(\Phi\left(\mathcal{A}\right)\right)\times\mathcal{P}_{f}\left(T\right)\rightarrow T$
by, $z\left(\mathcal{H},F\right)=\kappa\left(F\right)\cdot\Phi\left(x\left(\theta_{\mathcal{H}},\Psi_{F}^{\prime}\right)\right)$}\linebreak{}

{\large{}Note that for $F\in\mathcal{P}_{f}\left(T\right)$, $\mathcal{F}\subseteq\mathcal{H}$
in $\mathcal{P}_{f}\left(\mathcal{A}\right)$, so $\theta_{\mathcal{F}}\subseteq\theta_{\mathcal{H}}$
in $\mathcal{A}$.}{\large\par}

{\large{}Now from the definition as $\mathcal{A}$ is collectionwise
picewise syndetic in $S$, we have}{\large\par}

{\large{}$\Psi_{F}^{\prime}\cdot x\left(\theta_{\mathcal{H}},\Psi_{F}^{\prime}\right)\subseteq\bigcup_{t\in K\left(\theta_{\mathcal{F}}\right)}t^{-1}\left(\bigcap\theta_{\mathcal{F}}\right)$}{\large\par}

{\large{}$i.e,\Phi\left(\Psi_{F}^{\prime}\right)\cdot\Phi\left(x\left(\theta_{\mathcal{H}},\Psi_{F}^{\prime}\right)\right)\subseteq\bigcup_{t\in K\left(\theta_{\mathcal{F}}\right)}\Phi\left(t\right){}^{-1}\Phi\left(\bigcap\theta_{\mathcal{F}}\right)$}{\large\par}

{\large{}$i.e,\Phi\left(\Psi_{F}^{\prime}\right)\cdot\Phi\left(x\left(\theta_{\mathcal{H}},\Psi_{F}^{\prime}\right)\right)\subseteq\bigcup_{t\in K\left(\theta_{\mathcal{F}}\right)}\Phi\left(t\right){}^{-1}\left(\bigcap\mathcal{F}\right)$}{\large\par}

{\large{}$i.e,\Psi_{F}\cdot\Phi\left(x\left(\theta_{\mathcal{H}},\Psi_{F}^{\prime}\right)\right)\subseteq\bigcup_{t\in K\left(\theta_{\mathcal{F}}\right)}\Phi\left(t\right){}^{-1}\left(\bigcap\mathcal{F}\right)$}{\large\par}

{\large{}But $F\cdot y\subseteq\bigcup_{s\in G}s^{-1}\Psi_{F}$.}{\large\par}

{\large{}So we have,
\begin{align*}
F\cdot y\cdot\Phi\left(x\left(\theta_{\mathcal{H}},\Psi_{F}^{\prime}\right)\right) & \subseteq\bigcup_{s\in G}s^{-1}\Psi_{F}\cdot\Phi\left(x\left(\theta_{\mathcal{H}},\Psi_{F}^{\prime}\right)\right)\\
 & \subseteq\bigcup_{s\in G,t\in K\left(\theta_{\mathcal{F}}\right)}s^{-1}\Phi\left(t\right){}^{-1}\left(\bigcap\mathcal{F}\right)\\
 & =\bigcup_{s\in G,t\in K\left(\theta_{\mathcal{F}}\right)}\left(\Phi\left(t\right)\cdot s\right){}^{-1}\left(\bigcap\mathcal{F}\right)\\
 & \subseteq\bigcup_{l\in K_{1}\left(\mathcal{F}\right)}l^{-1}\left(\bigcap\mathcal{F}\right)
\end{align*}
}{\large\par}

{\large{}Which implies $F\cdot z\left(\mathcal{H},F\right)\subseteq\bigcup_{l\in K_{1}\left(\mathcal{F}\right)}l^{-1}\left(\bigcap\mathcal{F}\right)$.
Hence as we have defined above, this proves that $\Phi(\mathcal{A})$
is collectionwise picewise syndetic in $T$. }{\large\par}

{\large{}Therefore $\Phi(A)$ is central in $T$.}{\large\par}
\end{proof}
\begin{thm}
{\large{}\label{13} Let $\Phi:(S,\cdot)\rightarrow(T,\cdot)$ be
a semigroup homomorphism and $\Phi(S)$ is $J$-set in $T$. If $A\subseteq S$
is $J$-set then $\Phi(A)$ is also $J$-set in $T$.}{\large\par}
\end{thm}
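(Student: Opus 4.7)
The plan is to combine the two $J$-set hypotheses. Given $F=\{f_{1},\ldots,f_{k}\}\in\mathcal{P}_{f}(T^{\mathbb{N}})$, I will manufacture, at each stage $r\in\mathbb{N}$, an $x$-word in the variables $f_{i}(\cdot)$ that lies in $\Phi(S)$; pull these words back to elements $g_{i}(r)\in S$; apply the $J$-set property of $A$ in $S$ to the sequences $g_{i}\in S^{\mathbb{N}}$ built this way; and unfold the resulting product back in $T$ to land inside $\Phi(A)$.

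The first step is the inductive construction of the $g_{i}$. Suppose $t^{(1)},\ldots,t^{(r-1)}$ have already been produced and set $N_{r}:=\max t^{(r-1)}$ (with $N_{1}:=0$). Applying the $J$-set property of $\Phi(S)$ in $T$ to the time-shifted family $\{\,n\mapsto f_{i}(n+N_{r})\,\}_{i=1}^{k}$ yields $m_{r}\in\mathbb{N}$, $a^{(r)}\in T^{m_{r}+1}$ and $\tilde{t}^{(r)}\in\mathcal{J}_{m_{r}}$ with $x(m_{r},a^{(r)},\tilde{t}^{(r)},n\mapsto f_{i}(n+N_{r}))\in\Phi(S)$ for every $i$. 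Setting $t^{(r)}_{j}:=\tilde{t}^{(r)}_{j}+N_{r}$, this reads $x(m_{r},a^{(r)},t^{(r)},f_{i})\in\Phi(S)$, and by choice of $N_{r}$ we have $\min t^{(r)}>\max t^{(r-1)}$. Pick any $g_{i}(r)\in S$ with $\Phi(g_{i}(r))=x(m_{r},a^{(r)},t^{(r)},f_{i})$.

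Next, invoke the $J$-set property of $A$ in $S$ on $\{g_{1},\ldots,g_{k}\}$ to obtain $m'\in\mathbb{N}$, $b\in S^{m'+1}$ and $s\in\mathcal{J}_{m'}$ with $b_{1}\cdot g_{i}(s_{1})\cdot b_{2}\cdots b_{m'}\cdot g_{i}(s_{m'})\cdot b_{m'+1}\in A$ for every $i$. Apply $\Phi$ and expand each factor $\Phi(g_{i}(s_{j}))$ using the defining product for $x(m_{s_{j}},a^{(s_{j})},t^{(s_{j})},f_{i})$, then merge each string of constants sitting between two consecutive occurrences of an $f_{i}$-value into a single element of $T$. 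This produces an expression of the shape $x(M,a^{\mathrm{new}},t^{\mathrm{new}},f_{i})\in\Phi(A)$, where $M:=\sum_{j=1}^{m'}m_{s_{j}}$, $t^{\mathrm{new}}$ is the concatenation of the tuples $t^{(s_{1})},\ldots,t^{(s_{m'})}$, and $a^{\mathrm{new}}\in T^{M+1}$ is built from the merged constants. Since $F$ was arbitrary, $\Phi(A)$ will be a $J$-set in $T$.

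The one real obstacle is verifying that $t^{\mathrm{new}}\in\mathcal{J}_{M}$, i.e., strict monotonicity across blocks. Inside each block $t^{(s_{j})}$ monotonicity is inherited from $\tilde{t}^{(s_{j})}\in\mathcal{J}_{m_{s_{j}}}$, and across blocks it follows from $\max t^{(s_{j})}<\min t^{(s_{j}+1)}\leq\min t^{(s_{j+1})}$; this is precisely what the shifts $N_{r}$ were designed to guarantee. Applying $J$-setness of $\Phi(S)$ to shifted families is legitimate because the definition quantifies over every finite family in $T^{\mathbb{N}}$, and the arbitrariness in the choice of the preimage $g_{i}(r)$ is harmless since only $\Phi(g_{i}(r))$ enters the final identity.
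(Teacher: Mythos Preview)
Your proposal is correct and follows essentially the same strategy as the paper: iterate the $J$-set property of $\Phi(S)$ in $T$ (shifting to keep the time-tuples strictly separated) to build auxiliary sequences, apply the $J$-set property of $A$ to those sequences, and then expand and merge constants to exhibit the required $x(M,a^{\mathrm{new}},t^{\mathrm{new}},f_i)\in\Phi(A)$. The only cosmetic difference is that the paper first records the intermediate fact that $\Phi(A)$ is a $J$-set in $\Phi(S)$ and applies the second $J$-step there, whereas you pull each $x(m_r,a^{(r)},t^{(r)},f_i)$ back to a preimage $g_i(r)\in S$ and apply the $J$-set property of $A$ in $S$ directly; these are equivalent moves.
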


\begin{proof}
{\large{}First note that as, $A\subseteq S$ is $J$-set $\Phi(A)$
is a $J$-set in $\Phi(S)$.}{\large\par}

{\large{}Let $F\in\mathcal{P}_{f}\left(^{\mathbb{N}}T\right)$ and
$m\in\mathbb{N}$, $a\in T^{m+1}$, $t\in J_{m}$ i.e. $t(1)<t(2)<\dots<t(m)$
in $\mathbb{N}$ such that for each $f\in F$, $x\left(m,a,t,f\right)\in\Phi(S)$.
Let $t'=\max\left\{ t(1),t(2),\dots,t(m)\right\} $. For each $f\in F$,
let $g_{f}\in{}^{\mathbb{N}}T$ such that $g_{f}\left(n\right)=f\left(t'+n\right),\forall n\in\mathbb{N}.$
Define $G\in\mathcal{P}_{f}\left(^{\mathbb{N}}T\right)$ by $G=\left\{ g_{f}:f\in F\right\} $.
Then again as $\Phi(S)$ is $J$-set in $T$ and $G\in\mathcal{P}_{f}\left(^{\mathbb{N}}T\right)$
we can apply the above argument on $G$.}{\large\par}

{\large{}Using the above argument repeatedly we obtain $\left\{ m_{n}\right\} {}_{n=1}^{\infty}$,
$\left\{ a_{n}\right\} {}_{n=1}^{\infty}$, and $\left\{ t_{n}\right\} {}_{n=1}^{\infty}$
such that $x\left(m_{n},a_{n},t_{n},f\right)\in\Phi\left(S\right)$
for all $f\in F$ and $\max\left\{ t_{n}\right\} <\min\left\{ t_{n+1}\right\} $.}{\large\par}

{\large{}Let $F^{\prime}\in\mathcal{P}_{f}\left(^{\mathbb{N}}\Phi\left(S\right)\right)$
is defined by $F^{\prime}=\left\{ \left\{ x\left(m_{n},a_{n},t_{n},f\right)\right\} {}_{n=1}^{\infty}:f\in F\right\} $.}{\large\par}

{\large{}Now as, $\Phi\left(A\right)$ is a $J$-set in $\Phi\left(S\right)$,
there exists $m\in\mathbb{N}$, $a\in\Phi\left(S\right){}^{m+1}$,
$t\in J_{m}$ i.e. $t(1)<t(2)<\dots<t(m)$ in $\mathbb{N}$ such that
for each $g\in F^{\prime}$ , $x\left(m,a,t,g\right)\in\Phi(A)$.}{\large\par}

{\large{}Now 
\begin{align*}
x(m,a,t,g) & =\left(\prod_{i=1}^{m}a(i)g(t(i))\right)\cdot a(m+1)\\
 & =\left(\prod_{i=1}^{m}a(i).x(m_{t(i)},a_{t(i)},t_{t(i)},f)\}_{n=1}^{\infty}\right)\cdot a(m+1)\\
 & =\left(\prod_{i=1}^{n}b(i).f_{t(i)}\right)\cdot b(n+1)
\end{align*}
}{\large\par}

{\large{}for some $n\in\mathbb{N}$, $b=\left(b\left(1\right),b\left(2\right),\ldots,b\left(n\right)\right)\in T^{n+1}$,
$t\in J_{n}$ with $b(n+1)=a(m+1)$. The last line was found by expanding
the previous one. Hence $\Phi\left(A\right)$ is a $J$-set in $T$.}{\large\par}
\end{proof}
\begin{thm}
{\large{}Let $\Phi:(S,\cdot)\rightarrow(T,\cdot)$ be a semigroup
homomorphism and $\Phi(S)$ is $J$-set in $T$. If $A\subseteq S$
is $C$-set then $\Phi(A)$ is also $C$-set in $T$.}{\large\par}
\end{thm}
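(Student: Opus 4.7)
The plan is to mimic the proof of Theorem \ref{11} (the quasi-central case), but with the $J$-set condition replacing the piecewise syndetic condition, and invoking Theorem \ref{13} in place of Theorem \ref{piecewise}. So I would start from the downward directed family $\langle C_F\rangle_{F\in\mathcal{I}}$ of subsets of $A$ guaranteed by the characterization of $C$-sets in Theorem \ref{thm }, and propose $\langle \Phi(C_F)\rangle_{F\in\mathcal{I}}$, indexed by the same directed set $(\mathcal{I},\geq)$, as the witness that $\Phi(A)$ is a $C$-set in $T$. The family sits inside $\Phi(A)$, and downward directedness is immediate since $F\geq G$ gives $C_F\subseteq C_G$ and hence $\Phi(C_F)\subseteq \Phi(C_G)$.

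Next, I would verify the chain condition. Fix $F\in\mathcal{I}$ and $z\in \Phi(C_F)$, and pick a preimage $y\in C_F$ with $\Phi(y)=z$. By the chain condition for $A$ there exists $G\in\mathcal{I}$ with $C_G\subseteq y^{-1}C_F$. Applying $\Phi$ and using the elementary inclusion $\Phi(y^{-1}C_F)\subseteq \Phi(y)^{-1}\Phi(C_F)$, we obtain $\Phi(C_G)\subseteq z^{-1}\Phi(C_F)$, which is exactly the chain condition required in $T$.

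Finally, I would verify the $J$-set condition. Given $\mathcal{F}\in\mathcal{P}_f(\mathcal{I})$, the set $\bigcap_{F\in\mathcal{F}} C_F$ is a $J$-set in $S$ by hypothesis on $\langle C_F\rangle$; by Theorem \ref{13}, $\Phi\left(\bigcap_{F\in\mathcal{F}} C_F\right)$ is then a $J$-set in $T$. Since $\Phi\left(\bigcap_{F\in\mathcal{F}} C_F\right) \subseteq \bigcap_{F\in\mathcal{F}} \Phi(C_F)$, and since the class of $J$-sets is closed upward under inclusion (directly from the definition: the same $m$, $a$, $t$ that work for the smaller set work verbatim for any superset), we conclude that $\bigcap_{F\in\mathcal{F}} \Phi(C_F)$ is a $J$-set in $T$, and so $\Phi(A)$ is a $C$-set in $T$.

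I do not anticipate a real obstacle; the argument is a bookkeeping combination of the reduction strategy of Theorem \ref{11} with Theorem \ref{13}. The only step that deserves highlighting is the passage to a superset in the last paragraph, because the parallel notions (piecewise syndetic, central, quasi-central) would not automatically transfer from $\Phi\left(\bigcap_{F\in\mathcal{F}} C_F\right)$ to $\bigcap_{F\in\mathcal{F}} \Phi(C_F)$ without extra work, whereas the permissive nature of $J$-sets makes this step essentially free and is what allows the proof to remain short.
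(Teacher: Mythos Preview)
Your proposal is correct and is precisely the argument the paper has in mind: the paper's proof reads in full ``It is similar as \ref{11} and so the proof is left to the reader,'' and what you have written is exactly the routine unpacking of that sentence---transporting the downward directed family through $\Phi$, checking the chain condition via $\Phi(y^{-1}C_F)\subseteq \Phi(y)^{-1}\Phi(C_F)$, and invoking Theorem \ref{13} for the $J$-set condition. Your remark about upward closure of $J$-sets handling the passage from $\Phi\bigl(\bigcap_{F\in\mathcal F}C_F\bigr)$ to $\bigcap_{F\in\mathcal F}\Phi(C_F)$ is the one detail worth recording, and you have it right.
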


\begin{proof}
{\large{}It is similar as \ref{11} and so the proof is left to the
reader.}{\large\par}
\end{proof}

\section{{\large{}abstract Rado theorem}}

{\large{}In \cite{key-8} Rado proved that the system of equation
$Ax=b$ is partition regular over $\mathbb{Z}$i.e., each entries
of $x$ are in same color for any finite coloring of $\mathbb{Z}$
if and only if it has a constant solution. Then the above result was
further generalized for various cases. Recently in \cite{key-7} it
has been established that the result is still true if one consider
the entries of the matrix from a ring and consider the equation over
any ring $R$. Their result is,}{\large\par}
\begin{thm}
{\large{}Let $A$ be an $m\times n$ matrix with entries in a ring
$R$, and let $b\in R^{m}$ be non-zero. Then the system of equations
$Ax=b$ is partition regular over $R$ if and only if it has a constant
solution.}{\large\par}
\end{thm}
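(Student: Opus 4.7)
The plan is to separate the statement into its two directions and handle them quite differently.

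For the $(\Leftarrow)$ direction, I would simply observe that if $x = c \mathbf{1}$ solves $Ax = b$ for some fixed $c \in R$, then under any finite coloring $\chi$ of $R$ the singleton $\{c\}$ already has a single color, so $(c,c,\ldots,c)$ is automatically monochromatic. There is nothing combinatorial to do here, and I would dispatch this direction in two lines.

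For the $(\Rightarrow)$ direction, set $s := A\mathbf{1} \in R^{m}$, so that a constant solution $c\mathbf{1}$ exists if and only if $cs = b$ for some $c \in R$, i.e.\ $b \in Rs$. My plan is to argue by contraposition: assuming $b \notin Rs$, I would construct a finite coloring of $R$ under which $Ax=b$ has no monochromatic solution, contradicting partition regularity. The natural device for this is a quotient map $\varphi : R \to Q$ onto a finite ring with $\varphi(b) \notin Q\,\varphi(s)$; coloring each $r \in R$ by $\varphi(r)$, any monochromatic solution $(x_{1},\ldots,x_{n})$ with common image $q \in Q$ would satisfy $q\,\varphi(s) = \varphi(b)$, a contradiction.

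To produce such a finite quotient I would combine the preservation results from Sections~2 and~3. Namely, partition regularity of $Ax=b$ over $R$ propagates (through Theorem~\ref{13} on $J$-sets under homomorphism, and the analogous difference-group preservation results) to partition regularity of the induced system over the difference group $R-R$ and over any quotient. Once the problem is pushed into a sufficiently small quotient where the failure $b \notin Rs$ can be certified by a finite obstruction, the contradiction with partition regularity is immediate. The ``constant solution in $R$'' then comes from lifting $c$ back through the homomorphism, where cancellativity/commutativity assumed earlier in the paper is exactly what guarantees the lift.

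The main obstacle will be the construction of the finite quotient $Q$ in full generality: for $R = \mathbb{Z}$ one can use reduction modulo a carefully chosen prime, but in an abstract ring one must argue that the hypothesis $b \notin Rs$ is detected by \emph{some} finite-index subgroup of $(R,+)$ (respecting the multiplicative action needed for the equation). This is the step I expect to require the most care, and where the combinatorial machinery developed earlier—particularly the preservation of largeness under $\Phi : R \to R/I$—will have to do the real work.
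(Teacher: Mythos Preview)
The paper does not actually prove this ring statement; it is quoted from \cite{key-7}. What the paper \emph{does} prove is the group analogue immediately following it, and the method there (which is stated to mirror \cite{key-7}) is completely different from what you propose. In the paper's argument one takes the subgroup $H=\{\sum_i c_i(y):y\in G\}$, observes that ``no constant solution'' means $b\notin H$, chooses a maximal subgroup $K\supseteq H$ with $b\notin K$, and shows that $G^k/K$ embeds in the circle $\mathbb{T}$. This yields a homomorphism $\phi:G^k\to\mathbb{T}$ with $\phi(H)=0$ and $\phi(b)\neq 0$; one then colors $t\in G$ by the $n$-tuple of arcs containing $\phi(c_1(t)),\dots,\phi(c_n(t))$, and a monochromatic solution would force $\phi(b)$ to be a sum of $n$ arbitrarily small arcs, a contradiction. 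The target is the \emph{circle}, not a finite ring.

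Your plan has two genuine gaps. First, the reduction to a finite quotient ring $Q$ with $\varphi(b)\notin Q\varphi(s)$ simply does not exist in general: many rings (e.g.\ $\mathbb{Q}$, or any field of characteristic zero) have no nontrivial finite ring quotients at all, so there is nothing for your coloring to be built from. This is precisely why the Leader--Russell argument routes through $\mathbb{T}$ rather than through finite quotients. Second, and more seriously, your appeal to Sections~2 and~3 is misplaced. Theorem~\ref{13} and the surrounding results concern preservation of \emph{largeness notions} (thick, piecewise syndetic, central, $J$-set, $C$-set) under homomorphisms and passage to difference groups; they say nothing whatsoever about partition regularity of a system $Ax=b$, and cannot be used to ``propagate'' partition regularity to a quotient or to manufacture the finite obstruction you need. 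The combinatorial machinery of the earlier sections is irrelevant to this theorem, and invoking it does not fill the gap.
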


{\large{}Now to generalize the above theorem for groups we need an
analogous notion of matrix for groups. In \cite[Definition 7.3]{key-2}
it has been shown that one can proceed by considering homomorphism.
The proof of the following version is similar to the proof of \cite[Theorem 2]{key-7}.}{\large\par}
\begin{thm}
{\large{}Let $G$ be a commutative group with identity $0$. Let $k,n\in\mathbb{N}$
and let $A:G^{n}\rightarrow G^{k}$be a homomorphism and $b\in G^{k}$
be non zero. Then the $Ax=b$, where $x\in G^{n}$ is partition regular
over $G$, if and only if it has a constant solution.}{\large\par}
\end{thm}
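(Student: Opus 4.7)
The ``if'' direction is immediate: if $A(c,\ldots,c)=b$ for some $c\in G$, then in any finite coloring of $G$ the class containing $c$ contains the monochromatic solution $(c,c,\ldots,c)$.

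For the ``only if'' direction I would argue the contrapositive. Introduce the diagonal composition $\sigma:G\to G^{k}$ defined by $\sigma(g)=A(g,\ldots,g)$; equivalently $\sigma=A\circ\Delta$ for $\Delta:G\to G^{n}$ the diagonal embedding. Then $\sigma$ is a homomorphism, its image $H:=\sigma(G)$ is a subgroup of $G^{k}$, and the hypothesis ``no constant solution exists'' is exactly $b\notin H$. Decompose $A$ via column homomorphisms $A_{i}:G\to G^{k}$ (where $A_{i}(g)=A(0,\ldots,g,\ldots,0)$ with $g$ in the $i$-th slot) as $A(x_{1},\ldots,x_{n})=\sum_{i=1}^{n}A_{i}(x_{i})$.

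For any subgroup $K\leq G$, an $n$-tuple $(x_{1},\ldots,x_{n})$ lying in a common coset $a+K$ satisfies
\[
A(x_{1},\ldots,x_{n})=\sigma(a)+\sum_{i=1}^{n}A_{i}(x_{i}-a),
\]
so passing to the quotient $\pi:G^{k}\to Q:=G^{k}/H$ and writing $\bar{A}_{i}:=\pi\circ A_{i}$ gives $\pi(Ax)=\sum_{i}\bar{A}_{i}(x_{i}-a)$. Hence if we can choose $K\subseteq K_{0}:=\bigcap_{i=1}^{n}\ker\bar{A}_{i}$ with $K$ of finite index in $G$, then $\pi(Ax)=0\neq\pi(b)$ for every such tuple, and the (finite) coset coloring of $G$ by $K$ produces a coloring with no monochromatic solution to $Ax=b$, contradicting partition regularity.

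The main obstacle is producing such a finite-index $K\subseteq K_{0}$; this is the step that must mirror \cite[Theorem~2]{key-7}. Only finitely many elements of $G$ are involved in defining $A$ and $b$, which suggests reducing to a finitely generated subgroup and invoking the structure theorem for finitely generated abelian groups to produce a finite quotient separating $\pi(b)$ from $0$. Globalizing this finite quotient to a coloring of the whole group $G$ (without reintroducing monochromatic solutions outside the finitely generated part) is the delicate point, handled in \cite{key-7} by a careful descent argument; the group-theoretic version of that argument goes through essentially verbatim because $A$ is a homomorphism and the obstruction coloring depends only on cosets of a subgroup of $G$.
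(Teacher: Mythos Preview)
Your proposed route has a genuine gap at precisely the point you flag as ``the main obstacle.'' The coset-coloring scheme you set up requires a finite-index subgroup $K\le G$ with $K\subseteq K_{0}=\bigcap_{i}\ker\bar A_{i}$, i.e.\ $A_{i}(K)\subseteq H$ for every $i$. In general no such $K$ exists. Already for $G=\mathbb{Z}$, $n=2$, $k=1$, $A(x_{1},x_{2})=x_{1}-x_{2}$ and $b=1$ one has $\sigma\equiv 0$, so $H=\{0\}$, $Q=\mathbb{Z}$, $\bar A_{1}=\mathrm{id}$, $\bar A_{2}=-\mathrm{id}$, and hence $K_{0}=\{0\}$, which has infinite index. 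Thus your coloring is never produced, even though the equation is not partition regular (coloring by parity kills it). Your fallback, ``only finitely many elements of $G$ are involved in defining $A$,'' is also not correct: for a general abelian $G$ a homomorphism $G^{n}\to G^{k}$ is not determined by finitely many values, so the reduction to a finitely generated subgroup does not get off the ground.

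The paper (following \cite{key-7}) does \emph{not} use a coset coloring. Instead, from $b\notin H$ it takes a subgroup $K\le G^{k}$ maximal with $H\subseteq K$ and $b\notin K$; then $G^{k}/K$ is cocyclic (cyclic of prime-power order or $\mathbb{Z}_{p^{\infty}}$) and hence embeds in the circle $\mathbb{T}$. This yields a homomorphism $\phi:G^{k}\to\mathbb{T}$ with $\phi(H)=0$ and $\phi(b)\neq 0$. One then colors $t\in G$ by the $n$-tuple of arcs in which $\phi(A_{1}(t)),\ldots,\phi(A_{n}(t))$ lie, using a partition of $\mathbb{T}$ into $d$ arcs. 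For a monochromatic solution one gets $\sum_{i}\bigl(\phi(A_{i}(x_{i}))-\phi(A_{i}(x_{1}))\bigr)=\phi(b)$, while each summand has argument at most $2\pi/d$; taking $d$ large gives the contradiction. This is an analytic/approximation argument, not an exact coset argument, and that is exactly what circumvents the failure of your $K_{0}$ to have finite index.
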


\begin{proof}
{\large{}Let $G$ be a commutative group with identity $0$. Let $k,n\in\mathbb{N}$
and let $A:G^{n}\rightarrow G^{k}$be a homomorphism. For each $i=1,2,\dots,n$
let $c_{i}:G\rightarrow G^{k}$ be the map defined by $c_{i}(y)=A(0,0,\dots,y,0,\dots,0)$,
where $y$ appears in the $i$-th position. }{\large\par}

{\large{}Let $H=\left\{ \sum_{i=1}^{n}c_{i}(y):y\in G\right\} $.
One can easily check that $H$ is a subgroup of $G^{k}$. }{\large\par}

{\large{}Suppose that there is no constant solution (i.e. entries
of $x$ are not constant) : this means that $b$ does not belongs
to the subgroup $H$ of $G^{k}$.}{\large\par}

{\large{}Let us $K$ be the maximal subgroup of $G^{k}$ which contains
$H$ but not the element $b$. Then the quotient map $\theta$ from
$G^{k}$ to $G^{k}/K$ has $\theta(b)$ non-zero and also every non
trivial subgroup of $G^{k}/K$ must contain $\theta(b)$ (by the maximality
of $K$). It follows from this that $G^{k}/K$ is either a cyclic
group of prime power order or else the group $\mathbb{Z}_{p^{\infty}}$
of all $p^{k}$-th root of unity for any $k$ , for some fixed prime
$p$. In each case this is isomorphic to a subgroup of the circle
$\mathbb{T}$. Hence there is a group homomorphism $\phi$ from $G^{k}$
to the circle $\mathbb{T}$ such that $\phi(H)=0$ and $\phi(b)\neq0$.}{\large\par}

{\large{}Now define a coloring of $G$ with $d^{n}$coloring, where
$d$ is large enough positive integer, by coloring $t\in G$ with
the $n$ tupple}\\
{\large{} $\left(f\left(\phi\left(c_{1}\left(t\right)\right)\right),f\left(\phi\left(c_{2}\left(t\right)\right)\right),\dots,f\left(\phi\left(c_{n}\left(t\right)\right)\right)\right)$,
where $f$ is the map that sends interval of the circle with arguments
in $\left[2\pi j/d,2\pi(j+1)/d\right]$ to $j$, for each $0\leq j\leq d-1$.
Suppose that for this coloring we have a monochromatic vector $x$
in $G^{n}$such that $Ax=b$. We have that
\begin{align*}
\phi\left(\sum_{i=1}^{n}c_{i}\left(x_{i}\right)\right) & =\phi\left(\sum_{i=1}^{n}A(0,0,\dots,x_{i},0,\dots,0)\right)\\
 & =\phi\left(Ax\right)\\
 & =\phi(b)
\end{align*}
 where $x=\left(x_{1},\dots,x_{n}\right)$. But $\phi\left(\sum_{i=1}^{n}c_{i}\left(x_{1}\right)\right)=0$
as $\sum_{i=1}^{n}c_{i}\left(x_{1}\right)\in H$.}{\large\par}

{\large{}So we have,}\\
{\large{}
\[
\sum_{i=1}^{n}\left(\phi\left(c_{i}\left(x_{i}\right)\right)-\phi\left(c_{i}\left(x_{1}\right)\right)\right)=\phi\left(\sum_{i=1}^{n}c_{i}\left(x_{i}\right)\right)-\phi\left(\sum_{i=1}^{n}c_{i}\left(x_{1}\right)\right)=\phi(b).
\]
 But this is a contradiction for $d$ large, as each term in the sum
on the left hand side has argument in $\left[0,4\pi/d\right]$, by
the definition of coloring.}{\large\par}
\end{proof}
\textbf{\large{}Acknowledgment: }{\large{}The first author acknowledges
the grant UGC-NET SRF fellowship with id no. 421333 of CSIR-UGC NET
2016.}{\large\par}


\begin{thebibliography}{1}
{\large{}\bibitem{key-1} V. Bergelson and D. Glasscock, On the interplay
between additive and multiplicative largeness and its combinatorial
applications, J. Combin. Theory Ser. A, 172(2020)105203}{\large\par}

{\large{}\bibitem{key-2}V. Bergelson, J. H. Johnson Jr. and J. Moreira,
New polynomial and multidimensional extensions of classical partition
results, Journal of Combinatorial Theory, Series A 147 (2017) 119--154.}{\large\par}

{\large{}\bibitem{key-3} N. Hindman and D. Strauss, A simple characterization
of sets satisfying the Central Sets Theorem, New York J. Math. 15
(2009), 405-413.}{\large\par}

{\large{}\bibitem{key-4} N. Hindman and D. Strauss, Algebra in the
Stone-Cech Compactication: The-ory and Applications, 2nd edition,
Walter de Gruyter \& Co., Berlin, 2012.}{\large\par}

{\large{}\bibitem{key-5}N. Hindman and D. Strauss, Image partition
regularity of matrices over commutative semigroups, Topology and its
Applications 259 (2019), 179-202.}{\large\par}

{\large{}\bibitem{key-6} N. Hindman and D. Strauss,Image partition
regular matrices and concepts of largeness, New York J. Math. 26 (2020),
230-260.}{\large\par}

{\large{}\bibitem{key-7} I. Leader and P. Russell, Inhomogeneous
Partition Regularity, Electronic J. combinatorics, Volume 27, Issue
2 (2020)}{\large\par}

{\large{}\bibitem{key-8} R. Rado, Studien zur Kombinatorik, Math.
Zeit. 36 (1933), 242-280.}{\large\par}
\end{thebibliography}
\end{document}